\documentclass[a4paper,twoside,11pt,reqno]{amsart}
\newcommand{\Ueberschrift}{Series of $p$-groups with Beauville structure}
\newcommand{\Kurztitel}{Series of $p$-groups with Beauville structure}
\usepackage[headings]{fullpage}
\usepackage{amsmath} \usepackage{amssymb} \usepackage{amsopn}
\usepackage{amsthm} \usepackage{amsfonts} \usepackage{mathrsfs} 
\usepackage[dvips]{graphicx} \usepackage[usenames]{color}
\usepackage[all]{xy}
\usepackage[OT2, T1]{fontenc}
\usepackage[latin1]{inputenc}  

\usepackage{dsfont}

\addtolength{\textheight}{1.5cm}
\pagestyle{headings}
\usepackage[pdfpagelabels, pdftex]{hyperref}
\hypersetup{
  pdftitle={},
  pdfauthor={},
  pdfsubject={},
  pdfkeywords={},
  colorlinks=true,    
  linkcolor=red,     
  citecolor=blue,     
  filecolor=blue,      
  urlcolor=blue,       
  breaklinks=true,
  bookmarksopen=true,
  bookmarksnumbered=true,
  pdfpagemode=UseOutlines,
  plainpages=false}





\newcommand{\bF}{{\mathbb F}}

\newcommand{\bN}{{\mathbb N}}

\newcommand{\bP}{{\mathbb P}}

\newcommand{\bZ}{{\mathbb Z}}



\newcommand{\dB}{{\mathcal B}}







\newcommand{\surj}{\twoheadrightarrow} 

\DeclareMathOperator{\id}{id}

\DeclareMathOperator{\Aut}{Aut}


\DeclareMathOperator{\GL}{GL}







\newcommand{\ep}{\varepsilon}
\newcommand{\ph}{\varphi}

\newcommand{\ab}{{\rm ab}}

\DeclareMathOperator{\ord}{{\rm ord}}




%



\newtheorem{thm}{Theorem}

\newtheorem{prop}[thm]{Proposition}
\newtheorem{lem}[thm]{Lemma}

\newtheorem{cor}[thm]{Corollary}

\newtheorem{defi}[thm]{Definition}

\newtheorem{rmk}[thm]{Remark}

\newenvironment{pro*}[1][Proof]{{\it{#1:}} }{}




\begin{document}

%
%
%

\hrule width\hsize

\vspace{1cm}

\title[\Kurztitel]{\Ueberschrift} 
\author{\sc Jakob Stix}
\address{Jakob Stix, Institut f\"ur Mathematik, Johann Wolfgang Goethe--Universit\"at Frankfurt, Ro\-bert-Mayer-Stra\ss e~6--8,
60325~Frankfurt am Main, Germany}
\email{stix@math.uni-frankfurt.de}


\author{Alina Vdovina}
\address{Alina Vdovina,  School of Mathematics and Statistics, Newcastle University, Newcastle upon Tyne, NE1 7RU, UK}
\email{alina.vdovina@ncl.ac.uk}
 
\thanks{The second author acknowledge  the hospitality and support provided by MPIM-Bonn.}
\keywords{Beauville structure \and pro-finite $p$-groups}
\subjclass[2000]{20D15 \and 20F05}
\date{July 19, 2015}

\maketitle

\begin{quotation} 
\noindent \small {\bf Abstract} --- 
For every $p\geq 2$ we show that each finite $p$-group with an unmixed Beauville structure is part of a surjective infinite projective system of finite $p$-groups with compatible unmixed Beauville structures. This leads to the new notion of  an unmixed topological Beauville structure on pro-finite groups.
We further construct for $p \geq 5$ a new explicit infinite series of non-abelian $p$-groups that allow unmixed Beauville structures. 
\end{quotation}

\setcounter{tocdepth}{2} {\scriptsize \tableofcontents}

\section{Introduction}

The datum of an unmixed Beauville structure on a finite group $G$ encodes two finite branched $G$-covers 
$C \to \bP^1$ and $D \to \bP^1$ with ramification only above $0$, $1$ and $\infty$, such that the diagonal 
$G$ action on $C \times D$ is fixed point free. The quotient $X =  (C \times D)/G$ belongs to an interesting 
class of (rigid) algebraic surfaces, the systematic study of which was initiated by 
Catanese \cite{catanese:isogtoproduct}. These surfaces that are isogeneous to a product of smooth curves are now known as Beauville surfaces if $p_g = 0$, and those have been classified by Catanese, Bauer and Grunewald \cite{bcg08}
and Frapporti \cite{frapporti}. In general, surfaces that are isogeneous to a product of smooth curves play a r\^ole in the geography of complex surfaces of general type. They provide lower bounds for the complexity of the set of connected components for the moduli spaces of such surfaces with prescribed Chern numbers.

Our main result will be the construction of new series of finite $p$-groups with unmixed Beauville structures. These examples will form projective systems 
$(G_i, \ph_{ij})$ indexed by a directed set $I$, typically $I=\bN$ with the natural archimedian ordering. The projection maps $\ph_{ij} : G_j \to G_i$ will be surjective and map the chosen Beauville structure for $G_j$ to the one for $G_i$. 

Compatible Beauville structures in a projective system lead to a projective system of finite covers of the corresponding 
Beauville surfaces with the tower being unramified if and only if the signature does not change in the tower.

\begin{defi}
An \textbf{(unmixed) Beauville structure} on a finite group $G$ consists of an ordered pair of triples $(x,y,z)$ and $(a,b,c)$ of group elements such that 
\begin{enumerate}
\item $xyz =1 = abc$, 
\item $G$ is generated by $\{x,y,z\}$ and by $\{a,b,c\}$, 
\item no non-trivial power of an element of $\{x,y,z\}$ is conjugate to a power of an element of $\{a,b,c\}$.
\end{enumerate}
The \textbf{signature} of the Beauville structure is the tuple of orders of the elements $x,y,z,a,b,c$ and the Beauville structure is \textbf{balanced} if these orders are constant. The individual triples $(x,y,z)$ and $(a,b,c)$ are refered to as \textbf{half} of a Beauville structure.
\end{defi}

A natural question asks which finite groups do have a Beauville structure?
Beauville's original example showed that $(\bZ/5\bZ)^2$ has a Beauville
structure. This was later generalized by Catanese to $(\bZ/n\bZ)^2$ with $n$ coprime to $6$.

Other important classes of groups are simple groups and $p$-groups. 
 Existence of Beauville structures was shown 
for every non-abelian finite simple group (except $A_5$) by Malle and Guralnick~\cite{malleguralnik} and 
independently Fairbairn, Magaard and Parker~\cite{fairbairnmagaardparker:simplebeauville}.
Much less is known about non-abelian $p$-groups: some
examples of Beauville structures on $p$-groups 
of the order up to $p^7$ were constructed
 by Barker, Boston and Fairbairn~\cite{bbf:smallpgroups},
 and with $p=2$ by Barker, Boston, Peyerimhoff and the second author \cite{bbpv:newbeuaville}, and generalized for an infinite  family  in \cite{bbpv:two-groups} by the same authors.
For a more detailed survey on Beauville surfaces and groups we refer to \cite{bgv} and especially to Jones \cite[\S7]{jones:survey} for Beauville structures on $p$-groups.

\smallskip

In this paper we show that infinite families of
$p$-groups with Beauville structures exist in abundance.          

\begin{thm} \label{thmABC:pro-system}
Every finite $p$-group with an unmixed Beauville structure is part of an infinite projective system of finite $p$-groups with compatible unmixed Beauville structures. One may further insist that the signature of the first half remains constant thoughout the projective system.
\end{thm}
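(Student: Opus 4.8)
The plan is to realize $G$ as a quotient of a pro-$p$ triangle group and to build the whole projective system inside that triangle group, exploiting the fact that condition (3) becomes automatic once the orders of the first half are preserved.

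First note that any group with a Beauville structure is generated by $x,y$ (as $z=(xy)^{-1}$), so $G$ is $2$-generated, and it is non-cyclic because a cyclic group cannot satisfy condition (3); thus $d(G)=2$. Let $\sigma=(\ell,m,n)$ be the signature of the first half, i.e. $\ell=\ord(x)$, $m=\ord(y)$, $n=\ord(z)$, and let
$$\Delta=\langle X,Y\mid X^{\ell}=Y^{m}=(XY)^{n}=1\rangle$$
be the associated von Dyck triangle group. Sending $X\mapsto x$, $Y\mapsto y$ gives a surjection $\Delta\surj G$, which, since $G$ is a finite $p$-group, factors through the pro-$p$ completion $T:=\Delta_{\hat p}$, yielding $T\surj G$ with kernel $R_{0}$. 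Lifting $a,b$ to $\tilde a,\tilde b\in T$ and using $d(T)=2=d(G)$ (so $R_{0}\subseteq\Phi(T)$), the pair $\tilde a,\tilde b$ again generates $T$; put $\tilde c=(\tilde a\tilde b)^{-1}$ and $Z=(XY)^{-1}$.

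For every open normal $R\trianglelefteq T$ with $R\subseteq R_{0}$ the finite $p$-group $Q=T/R$ surjects onto $G$, and the images of $(X,Y,Z)$ and $(\tilde a,\tilde b,\tilde c)$ are two generating triples of product $1$, so conditions (1) and (2) hold automatically; moreover, since $Q$ is a quotient of $T$ and maps onto $G$, the orders of the images of $X,Y,Z$ in $Q$ are exactly $\ell,m,n$. The conceptual heart is that this forces condition (3): writing $\Sigma_{1},\Sigma_{2}$ for the sets of conjugates of powers of the first, resp.\ second, triple, suppose some $w\ne 1$ lay in $\Sigma_{1}\cap\Sigma_{2}$ in $Q$; its image in $G$ would lie in $\Sigma_{1}\cap\Sigma_{2}=\{1\}$, so $w\in K:=\ker(Q\surj G)$, and as $K$ is normal the relevant first-half power also lies in $K$. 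But order preservation makes every nontrivial power of $X,Y,Z$ survive in $G$, so that power is trivial, whence $w=1$, a contradiction. Thus each such $Q$ carries a Beauville structure compatible with that of $G$, with constant first-half signature.

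It remains to produce infinitely many distinct such $R$, which reduces to showing that $R_{0}$ — equivalently $T$ — is infinite: then intersecting $R_{0}$ with a descending chain of open normal subgroups of $T$ having trivial intersection yields an infinite properly descending chain inside $R_{0}$, hence an $\bN$-indexed tower $G=Q_{0}\twoheadleftarrow Q_{1}\twoheadleftarrow\cdots$. This infiniteness is the main obstacle. I would first rule out the spherical case: every finite triangle group has a signature entry equal to $2$ or $3$, so for odd $p$ no spherical $\sigma$ has all entries powers of $p$, while for $p=2$ the only such signatures are $(2,2,2^{k})$, for which $T$ is dihedral and neither it nor its quotients admit a Beauville structure (three reflections cannot have product $1$, and the full rotation subgroup is already exhausted by the powers of $z$); both cases contradict the hypothesis on $G$. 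Hence $\Delta$ is Euclidean or hyperbolic, so infinite as an abstract group. To upgrade this to the pro-$p$ statement I would pass to $\Delta'=[\Delta,\Delta]$, which has $p$-power index $|\Delta^{\ab}|$ in $\Delta$ and, being torsion-free and cocompact Fuchsian, is a surface group of genus $g\ge 1$; its abelianization is infinite, so $\Delta'$ has infinite pro-$p$ completion, and since $\Delta'\trianglelefteq\Delta$ has $p$-power index, the mod-$p$ lower central subgroups of $\Delta'$ are normal in $\Delta$ of $p$-power index and unbounded order, forcing $T=\Delta_{\hat p}$ to be infinite. Assembling these steps yields the projective system and, in particular, the constancy of the first-half signature asserted in the final sentence.
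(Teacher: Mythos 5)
Most of your proposal is sound and closely parallels the paper's argument: realizing $G$ as a quotient of the pro-$p$ completed triangle group, lifting $a,b$ via the Frattini argument, and deducing condition (3) from order preservation of the first triple is exactly the paper's mechanism (its Propositions on triangle group lifting); in fact your single global lift of $(a,b)$ to $T$ is a clean shortcut that avoids the paper's stage-by-stage lifting and compactness argument. The problem is in the infiniteness step. Your claim that $\Delta'=[\Delta,\Delta]$ is torsion-free (hence a surface group) is false for general $p$-power signatures. The torsion elements of $\Delta$ are the conjugates of powers of $X,Y,Z$, and a power $X^i\neq 1$ lies in $\Delta'$ as soon as its image in $\Delta^{\ab}$ dies; a direct computation with $\Delta^{\ab}=\bZ^2/\langle (\ell,0),(0,m),(n,n)\rangle$ shows the image of $X$ has order $\gcd(\ell,\mathrm{lcm}(m,n))$. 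So whenever one entry of the signature strictly exceeds the other two --- e.g.\ $\sigma=(p^{3},p,p)$, which is hyperbolic for $p\geq 3$ and is not excluded by anything you have proved about Beauville signatures --- the element $X^{p}$ is a nontrivial torsion element of $\Delta'$, and the deduction ``torsion-free cocompact Fuchsian, hence surface group of genus $g\geq 1$, hence infinite abelianization'' collapses. (The conclusion that $\Delta'$ has positive genus happens to remain true in these cases, but that requires a Riemann--Hurwitz computation of the cover $\Delta'\backslash\bH\to\Delta\backslash\bH$ keeping track of the residual cone points, which you have not done; as written, the step is simply wrong.)

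The repair is essentially what the paper does: instead of $[\Delta,\Delta]$, use $N=\ker(\Delta\surj G)$. This subgroup is normal of $p$-power index $|G|$, and it is torsion-free precisely because the Beauville generators preserve orders: $N\cap g\langle X\rangle g^{-1}=g\,\ker(\langle X\rangle\to G)\,g^{-1}=1$ since $x$ has order exactly $\ell$ in $G$, and similarly for $Y,Z$. Thus $N$ is a cocompact torsion-free Fuchsian (or Euclidean crystallographic) group, i.e.\ a surface group, of genus $\geq 2$ in the hyperbolic case by Riemann--Hurwitz (the paper quotes \cite[Prop.~3.2]{bcg:withoutreal} for hyperbolicity, or uses the same dihedral exclusion you give). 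Your own final argument --- the terms of the lower $p$-central series of this normal subgroup are characteristic in it, hence normal in $\Delta$, of $p$-power index and unbounded index because a surface group of positive genus has infinite abelianization --- then applies verbatim to $N$ and yields that $T=\Delta^{\wedge p}$ is infinite. With that substitution your proof is complete and matches the paper's.
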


The following corollary answers Problem 4.8 posed by Fairbairn in \cite{fairbairn} in the affirmative (the proposed Beauville structures are explicit as $p$-quotients of the respective triangle groups).

\begin{cor}
For any prime number $p \geq 2$ there are infinitely many $p$-groups with unmixed Beauville structures.
\end{cor}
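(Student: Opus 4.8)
The plan is to deduce the corollary formally from Theorem~\ref{thmABC:pro-system}, for which the only extra ingredient needed is, for each prime $p$, a single \emph{seed}: one finite $p$-group carrying an unmixed Beauville structure. Once such a seed is available, Theorem~\ref{thmABC:pro-system} produces an entire infinite projective system of finite $p$-groups with compatible unmixed Beauville structures, and from that system I would extract infinitely many pairwise non-isomorphic groups by a crude order count.

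First I would assemble the seed examples, distinguishing cases according to $p$. For $p \geq 5$ one may take the abelian group $(\bZ/p\bZ)^2$, which admits a Beauville structure by Catanese's generalization of Beauville's example (valid whenever $\gcd(n,6)=1$); alternatively one may invoke the explicit non-abelian series constructed later in this paper. For $p=3$ a finite $3$-group with an unmixed Beauville structure is furnished by the examples of Barker, Boston and Fairbairn~\cite{bbf:smallpgroups}, and for $p=2$ by the examples of Barker, Boston, Peyerimhoff and the second author~\cite{bbpv:newbeuaville, bbpv:two-groups}. Thus in every case a seed $G$ exists.

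Next I would apply Theorem~\ref{thmABC:pro-system} to such a seed $G$: it embeds $G$ into an infinite projective system $(G_i,\ph_{ij})$ of finite $p$-groups whose transition maps are surjective and carry the chosen unmixed Beauville structures onto one another. Since the system is genuinely infinite, i.e.\ its inverse limit is an infinite pro-$p$ group, the surjectivity of the transition maps forces the orders $|G_i|$ to grow without bound. Consequently the $G_i$ fall into infinitely many distinct isomorphism classes, each member of which is a finite $p$-group equipped with an unmixed Beauville structure, which is exactly the assertion of the corollary.

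The only genuine content beyond Theorem~\ref{thmABC:pro-system} is the production of the seed for the small primes $p \in \{2,3\}$: here the abelian candidates $(\bZ/p\bZ)^2$ fail precisely because $\gcd(p,6)\neq 1$, so one cannot avoid appealing to the cited non-abelian constructions. The subsequent step—that an infinite projective system with surjective maps and infinite limit must contain groups of unbounded order, hence infinitely many isomorphism types—is routine and I would not belabor it.
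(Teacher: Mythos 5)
Your proposal is correct and takes essentially the same route as the paper: Theorem~\ref{thmABC:pro-system} reduces the corollary to exhibiting one seed finite $p$-group with an unmixed Beauville structure for each prime, which you supply via the abelian examples of Theorem~\ref{thm:abelianBeauvillestructures} for $p \geq 5$ and via the cited literature for $p = 2, 3$ (the paper instead cites Fairbairn's survey, Theorem~4.5, for these two primes, but the underlying examples are the same). Your extra observation that surjective transition maps with an infinite inverse limit force the orders $|G_i|$ to be unbounded, hence yield infinitely many isomorphism classes, correctly fills in a step the paper leaves implicit.
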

\begin{proof}
By Theorem~\ref{thmABC:pro-system} it suffices to find a single $p$-group with a Beauville structure. For $p \geq 5$ the simplest examples are the abelian groups $\bZ/p^n\bZ \times \bZ/p^n\bZ$ as recalled in Theorem~\ref{thm:abelianBeauvillestructures} (and in this case the corollary is well known). For $p=2$ and $p=3$ we refer to 
\cite[Theorem~4.5]{fairbairn}  for examples of $p$-groups with a Beauville structure.
\end{proof}

While the construction of the projective system in Theorem~\ref{thmABC:pro-system} is not explicit, the second construction for 
Theorem~\ref{thmABC:uniform} exploits the structure of a met\-abelian uniform $p$-group and is completely explicit.

\begin{thm} \label{thmABC:uniform}
Let $p$ be a prime, $n,m \in \bN$ and $\lambda \in (\bZ/p^m\bZ)^\times$ with $\lambda^{p^n} \equiv 1 \mod p^m$. The semidirect product 
\[
\bZ/p^m\bZ \rtimes_\lambda \bZ/p^n\bZ
\]
with action $\bZ/p^n\bZ \to  \Aut(\bZ/p^m\bZ)=(\bZ/p^m\bZ)^\times$ sending $1 \mapsto \lambda$ admits an unmixed Beauville structure if and only if $p \geq 5$ and $n=m$. 

All Beauville structures  are balanced of constant signature $p^n$.
\end{thm}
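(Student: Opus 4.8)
The plan is to reduce condition~(3) of a Beauville structure to a combinatorial condition on lines in the $\bF_p$-vector space $\bF_p^2$ attached to $G$. Write $G=\langle t\rangle\rtimes_\lambda\langle s\rangle$ with $sts^{-1}=t^\lambda$, so $|G|=p^{m+n}$ and $G$ is a $2$-generated $p$-group with $G/\Phi(G)\cong\bF_p^2$, spanned by the images of $t,s$; by the Burnside basis theorem a pair generates $G$ iff its images are linearly independent. Since $\lambda^{p^n}\equiv1\bmod p^m$, the element $\lambda$ has $p$-power order in $(\bZ/p^m\bZ)^\times$, hence $\lambda\equiv1\bmod p$; write $\lambda=1+p^eu$ with $p\nmid u$ and $e\ge\max(1,m-n)$. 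I first record the two facts on which everything rests: the \emph{top direction} $d(g)=[\bar g]\in\bP^1(\bF_p)$ of $g\notin\Phi(G)$ is invariant under conjugation and under prime-to-$p$ powers (because $\bF_p^2$ is a quotient of $G^{\mathrm{ab}}$); and the order formula $\ord(t^is^j)=p^{\max(m-v_p(i),\,n-v_p(j))}$, obtained from the expansion $(t^is^j)^k=t^{i(1+\lambda^j+\dots+\lambda^{(k-1)j})}s^{kj}$ together with the lifting-the-exponent value of $v_p\big(\sum_{r<k}\lambda^{jr}\big)$ for $p$ odd (the customary $p=2$ correction changes the formula but not the structural conclusions below).

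The central object is the subgroup $W\le G$ of elements of order dividing $p$; a short check gives $W=\langle t^{p^{m-1}}\rangle\times\langle s^{p^{n-1}}\rangle\cong\bF_p^2$, the line $\langle t^{p^{m-1}}\rangle$ being central always and $\langle s^{p^{n-1}}\rangle$ as soon as $e+n-1\ge m$. For $g$ of order $p^d$ the \emph{bottom element} $g^{p^{d-1}}\in W$ defines a \emph{bottom direction} $\beta(g)\in\bP(W)\cong\bP^1(\bF_p)$, invariant under prime-to-$p$ powers. The key computation is that \emph{when $n=m$} one has $g^{p^{n-1}}=(t^{p^{n-1}})^i(s^{p^{n-1}})^j$ for every $g=t^is^j\notin\Phi(G)$, so that $\beta=d$ and $W$ is central; then $\beta$ is conjugation-invariant as well, and if a nontrivial power $g^k$ is conjugate to $h^l$ their order-$p$ powers coincide, forcing $d(g)=d(h)$. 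Thus for $n=m$ condition~(3) holds \emph{iff} the two direction-triples $\{d(x),d(y),d(z)\}$ and $\{d(a),d(b),d(c)\}$ are disjoint; for the converse, equal directions make the two central order-$p$ powers proportional, hence equal up to a unit power, exhibiting a forbidden coincidence.

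Granting this, for $n=m$ a Beauville structure is exactly a pair of disjoint triples of distinct points of $\bP^1(\bF_p)$: given three distinct directions I prescribe them as $d(x),d(y),d(z)$ by lifting suitable vectors of $\bF_p^2$ with zero sum and setting $z=(xy)^{-1}$, and each half generates since its two directions are independent. By the order formula every element outside $\Phi(G)$ has order exactly $p^n$, so all six elements have order $p^n$ and every structure is automatically balanced of signature $p^n$. Two disjoint triples of distinct points exist precisely when $\#\bP^1(\bF_p)=p+1\ge6$, i.e. $p\ge5$; for $p\in\{2,3\}$ there is no room, and no structure exists.

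It remains to exclude $m\neq n$, which is the heart of the argument and the main obstacle. Suppose $m>n$ (the case $m<n$ is symmetric, with $s$ in place of $t$). Using the order formula and the same expansion, I will show that every $g=t^is^j$ whose top direction is \emph{not} the $s$-line $[(0,1)]$ — equivalently $v_p(i)=0$ — has bottom element a nontrivial power of $t^{p^{m-1}}$, so $\beta(g)=[(1,0)]$ is the \emph{central} line $\langle t^{p^{m-1}}\rangle$. As a generating triple has three \emph{distinct} top directions, at most one equals $[(0,1)]$, so each half contains at least two elements $g$ with $\beta(g)=[(1,0)]$. Choosing such a $g$ in the first half and $h$ in the second, both order-$p$ powers lie in the central line $\langle t^{p^{m-1}}\rangle$ and hence are equal up to a unit power, producing a nontrivial power of $g$ that is a power of $h$ and contradicting~(3). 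The delicate point is precisely this bottom-layer analysis: $W$ need not be central when $e=m-n$, so $\beta$ is genuinely conjugation-invariant only along the dominant line, and the argument is engineered to use only that line, which is central for every admissible $\lambda$. Combining the paragraphs yields the stated equivalence together with balancedness and signature $p^n$; the abelian case $\lambda=1$ is consistent with Theorem~\ref{thm:abelianBeauvillestructures}.
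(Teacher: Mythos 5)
Your route is genuinely different from the paper's. The paper works in the pro-$p$ group $\Gamma=\bZ_p\rtimes_\lambda \bZ_p$, filters it by the normal subgroups $\Gamma_r=p^r\bZ_p\rtimes_\lambda p^r\bZ_p$, and shows (Lemma~\ref{lem:magicofuniformgroups}) that the $p^s$-power maps induce isomorphisms $\Gamma_r/\Gamma_{r+1}\simeq\Gamma_{r+s}/\Gamma_{r+s+1}$; everything is then transported to the Frattini quotient, where existence for $p\geq 5$ and the exclusion of $p=2,3$ come from the abelian classification (Theorem~\ref{thm:abelianBeauvillestructures}). You instead compute inside $G$ itself, via an explicit order formula and the comparison of the top direction $d(g)\in\bP^1(\bF_p)$ with the bottom direction $\beta(g)$ in the socle $W$. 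For odd $p$ your argument is correct: since $\lambda$ has $p$-power order, $\lambda\equiv 1\bmod p$, so the unit $u'$ in $(t^is^j)^{p^{n-1}}=(t^{p^{n-1}})^{iu'}(s^{p^{n-1}})^{j}$ satisfies $u'\equiv 1\bmod p$, hence $\beta=d$ and $W$ is central when $n=m$; the reduction of condition (3) to disjointness of two triples of distinct points of $\bP^1(\bF_p)$, hence to $p+1\geq 6$, is sound; and your exclusion of $m\neq n$ via the central line $\langle t^{p^{m-1}}\rangle$ is correct and comparably clean to the paper's argument with the cyclic image of $\Gamma_r$. Your counting in $\bP^1(\bF_p)$ also replaces the paper's appeal to the abelian classification both for existence and for ruling out $p=3$.

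The genuine gap is at $p=2$, and it is not a routine ``customary correction.'' The hypothesis $\lambda^{2^n}\equiv 1\bmod 2^m$ allows $\lambda\equiv -1\bmod 4$ (for instance $\lambda=-1$ with $n=m$), and there both of your foundational facts fail: for $\lambda=-1$ and $g=ts$ one has $g^2=t^{1+\lambda}s^2=s^2$, so for $n\geq 2$ the order formula is wrong and $\beta(g)=[(0,1)]\neq[(1,1)]=d(g)$. Consequently equal top directions no longer force conjugate nontrivial powers, and the count $\#\bP^1(\bF_2)=3<6$ proves nothing: a priori the two halves could have disjoint sets of bottom lines even though their top directions necessarily coincide. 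The paper handles exactly this case by a separate argument: for $\lambda\equiv -1\bmod 4$ every square is congruent to $(2,0)$ or $(0,2)$ modulo $G_2$, elements in the same Frattini class have congruent $2^{r-1}$-st powers modulo $G_r$, and both square types occur in every generating triple, which forces two elements, one from each half, to have equal nontrivial $2^{s-1}$-st powers with $s=\max\{n,m\}$. The same defect affects your $m\neq n$ exclusion at $p=2$, since it rests on the same lifting-the-exponent computation. So your proof stands for all odd $p$ (and in fact for $p=2$ with $\lambda\equiv 1\bmod 4$, where the relevant form of lifting the exponent is valid), but the case $p=2$, $\lambda\equiv-1\bmod 4$ requires an argument you have not supplied.
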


The special case $\lambda = 1 + p$ of Theorem~\ref{thmABC:uniform} is contained in (and inspiration came from) 
\cite[Lemma~10]{bbf:smallpgroups}. An extension to $\lambda = 1 + p^r$ with $1 \leq r \leq n=m$ is contained in 
Jones \cite[\S7]{jones:survey}. 
%
%
Theorem~\ref{thmABC:pro-system} will be proven in Section~\S\ref{sec:pro}, while Section~\S\ref{sec:uniform} contains the proof of Theorem~\ref{thmABC:uniform}.

%
%
%
%

\section{Projective systems of Beauville structures} \label{sec:pro}

Versions of the following lifting result have also been observed for example in \cite[Lemma 17]{bbf:smallpgroups} , or 
\cite[Lemma 4.2]{fuertesjones}. The special case of $p$-groups considered here allows us to exploit the Frattini property to ensure the generating property. 

\subsection{Triangle group lifting} We start with an observation for general finite groups.

\begin{prop} \label{prop:trianglegrouplifting-non-p-group}
Let $\tilde{G} \surj G$ be a quotient map of finite groups such that $G$ admits a Beauville structure given by the tuples 
$(x,y,z)$ and $(a,b,c)$.
Assume that $(x,y,z)$ lifts to a triple $(\tilde{x},\tilde{y},\tilde{z})$ in $\tilde{G}$ with $\tilde{x}\tilde{y}\tilde{z} = 1$, generating $\tilde{G}$ and preserving the orders of the respective elements. 

If  $(a,b,c)$ lifts to a generating triple $(\tilde{a},\tilde{b},\tilde{c})$ in $\tilde{G}$ with product $1$ (not necessarily preserving
the orders), then $\tilde{G}$ admits a Beauville structure given by the tuples $(\tilde{x},\tilde{y},\tilde{z})$ and $(\tilde{a},\tilde{b},\tilde{c})$.
\end{prop}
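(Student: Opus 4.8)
The plan is to verify the three defining conditions of a Beauville structure for the pair of lifted triples $(\tilde{x},\tilde{y},\tilde{z})$ and $(\tilde{a},\tilde{b},\tilde{c})$ directly. The first two conditions come essentially for free from the hypotheses: condition (1) holds because $\tilde{x}\tilde{y}\tilde{z}=1$ is assumed outright and $\tilde{a}\tilde{b}\tilde{c}=1$ is part of the assumption that $(a,b,c)$ lifts to a triple with product $1$; condition (2) holds because both lifted triples are assumed to generate $\tilde{G}$. So all the genuine content lies in condition (3), the disjointness of the conjugacy classes of the powers.

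To establish condition (3) I would argue by contradiction. Suppose some non-trivial power $\tilde{u}^i$ of an element $\tilde{u}\in\{\tilde{x},\tilde{y},\tilde{z}\}$ is conjugate in $\tilde{G}$ to a power $\tilde{v}^j$ of an element $\tilde{v}\in\{\tilde{a},\tilde{b},\tilde{c}\}$, say $\tilde{g}\,\tilde{u}^i\,\tilde{g}^{-1}=\tilde{v}^j$ for some $\tilde{g}\in\tilde{G}$. Applying the quotient homomorphism $\pi\colon\tilde{G}\surj G$, which respects products, powers and conjugation, and writing $u=\pi(\tilde{u})$, $v=\pi(\tilde{v})$, $g=\pi(\tilde{g})$, one obtains $g\,u^i\,g^{-1}=v^j$ in $G$, with $u\in\{x,y,z\}$ and $v\in\{a,b,c\}$.

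The crucial step, and the only place where the order-preserving hypothesis enters, is to check that the projected power $u^i$ remains non-trivial. Since $(\tilde{x},\tilde{y},\tilde{z})$ lifts $(x,y,z)$ preserving orders, $\tilde{u}$ and $u$ have the same order $N$; hence $\tilde{u}^i\neq 1$ forces $N\nmid i$, which forces $u^i\neq 1$. Thus $u^i$ is a non-trivial power of an element of $\{x,y,z\}$ that is conjugate to a power $v^j$ of an element of $\{a,b,c\}$ in $G$, contradicting condition (3) of the Beauville structure on $G$. This contradiction yields condition (3) for $\tilde{G}$ and completes the argument.

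I would emphasise that no control over the orders of the second triple is needed: the argument never compares the order of $\tilde{v}$ with that of $v$, which is exactly why the hypotheses can afford to let the lift of $(a,b,c)$ ignore orders while insisting on order preservation only for $(x,y,z)$. The sole (and modest) obstacle is recognising that the non-triviality of $\tilde{u}^i$ must be transported to non-triviality of $u^i$ before condition (3) downstairs can be invoked, and that this transport is precisely what the order-preservation of the first lift guarantees.
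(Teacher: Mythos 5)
Your proposal is correct and follows essentially the same argument as the paper: project a hypothetical conjugacy relation $\tilde{g}\,\tilde{u}^i\,\tilde{g}^{-1}=\tilde{v}^j$ down to $G$, invoke the Beauville property of $G$, and use the order-preservation of the first triple to transfer non-triviality of the power between $\tilde{G}$ and $G$. The only difference is cosmetic: the paper concludes $x^n=1$ downstairs and lifts this to $\tilde{x}^n=1$ for the contradiction, whereas you first establish $u^i\neq 1$ and then contradict condition (3) in $G$ --- the same two ingredients applied in the opposite order.
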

\begin{proof}
If  $(\tilde{x},\tilde{y},\tilde{z})$ and $(\tilde{a},\tilde{b},\tilde{c})$ do not form a Beauville structure, then without loss of generality there is  $n,m \in \bN$ and $g \in \tilde{G}$ such that $1 \not= \tilde{x}^n = g(\tilde{a}^m)g^{-1}$. 
Projecting to $G$ and using that $(x,y,z)$ and $(a,b,c)$ is a Beauville structure, we find $x^n =1$ in $G$. But since $\tilde{x}$ and $x$ have the same orders, we deduce $\tilde{x}^n  =1$, a contradiction.
\end{proof}

We now specialise Proposition~\ref{prop:trianglegrouplifting-non-p-group} to $p$-groups. The defining property of the Frattini subgroup allows to weaken the assumptions. We recall that the \textbf{Frattini subgroup} of a finite group $G$ is the group 
\[
\Phi(G) = \bigcap_{H < G, \text{ maximal}} H.
\]
For a $p$-group $G$ the Frattini subgroup is the kernel of the quotient map to the maximal elementary abelian $p$-quotient
\[
\Phi(G)  = G^p \cdot [G,G] = \ker(G \surj G^\ab/p G^\ab).
\]
A set of elements  generates  $G$ if and only if their images in $G/\Phi(G)$ form a set of generators.

\begin{prop} \label{prop:trianglegrouplifting}
Let $\tilde{G} \surj G$ be a quotient map of finite $p$-groups such that $G$ admits a Beauville structure given by the tuples 
$(x,y,z)$ and $(a,b,c)$.

Assume that $(x,y,z)$ lifts to a triple $(\tilde{x},\tilde{y},\tilde{z})$ in $\tilde{G}$ with $\tilde{x}\tilde{y}\tilde{z} = 1$, generating $\tilde{G}$ and preserving the orders of the respective elements. Then the Beauville structure lifts to a Beauville structure $(\tilde{x},\tilde{y},\tilde{z})$ and $(\tilde{a},\tilde{b},\tilde{c})$ of $\tilde{G}$
with $\tilde{a},\tilde{b}$ arbitrary lifts of $a,b$ to $\tilde{G}$ and $\tilde{c} = (\tilde{a}\tilde{b})^{-1}$.
\end{prop}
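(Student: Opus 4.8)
The plan is to reduce everything to Proposition~\ref{prop:trianglegrouplifting-non-p-group}. The triple $(\tilde x,\tilde y,\tilde z)$ already satisfies the hypotheses imposed there on the first half, so it suffices to check that the prescribed second half $(\tilde a,\tilde b,\tilde c)$, with $\tilde c=(\tilde a\tilde b)^{-1}$, is a generating triple of $\tilde G$ with product $1$. The relation $\tilde a\tilde b\tilde c=1$ holds by the very definition of $\tilde c$, so the entire content is the generating property. This is exactly the place where the Frattini property of $p$-groups recalled above is meant to replace the generation hypothesis on the lift of $(a,b,c)$ that was still required in Proposition~\ref{prop:trianglegrouplifting-non-p-group}.

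To establish generation I would pass to the maximal elementary abelian quotients and argue linearly. Write $V=\tilde G/\Phi(\tilde G)$ and $\bar V=G/\Phi(G)$, both finite-dimensional $\bF_p$-vector spaces, and let $\pi\colon V\surj\bar V$ be the map induced by $\tilde G\surj G$; it is well defined and surjective because $\Phi(\tilde G)=\tilde G^p[\tilde G,\tilde G]$ maps into $G^p[G,G]=\Phi(G)$. By the criterion recalled before the statement, a subset of $\tilde G$ generates $\tilde G$ precisely when its image spans $V$. Since $V$ is abelian we have $\bar{\tilde c}=-\bar{\tilde a}-\bar{\tilde b}$, so it remains to show that $\bar{\tilde a},\bar{\tilde b}$ span $V$.

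The two ingredients are a dimension bound from above and a linear independence statement from below. First, from $\tilde x\tilde y\tilde z=1$ we get $\bar{\tilde z}=-\bar{\tilde x}-\bar{\tilde y}$, so the generating set $\{\tilde x,\tilde y,\tilde z\}$ of $\tilde G$ has its image already spanned by $\bar{\tilde x},\bar{\tilde y}$; hence $\dim_{\bF_p}V\le 2$. Second, the same reasoning applied to the half $(a,b,c)$ of the Beauville structure on $G$ shows that $\bar a,\bar b$ span $\bar V$; and since a group admitting a Beauville structure is non-cyclic — in a cyclic $p$-group any two non-trivial cyclic subgroups both contain the unique subgroup of order $p$, forcing a common non-trivial power and violating condition~(3) — we have $\dim_{\bF_p}\bar V=2$, so $\bar a,\bar b$ are a basis of $\bar V$. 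As $\pi(\bar{\tilde a})=\bar a$ and $\pi(\bar{\tilde b})=\bar b$ are then linearly independent, so are their preimages $\bar{\tilde a},\bar{\tilde b}$ in $V$; two independent vectors in a space of dimension at most $2$ form a basis. Thus $\{\tilde a,\tilde b,\tilde c\}$ generates $\tilde G$, and Proposition~\ref{prop:trianglegrouplifting-non-p-group} completes the proof.

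The main obstacle is precisely the gap that the Frattini argument is built to close: a priori the classes $\bar{\tilde a},\bar{\tilde b}$ only span whatever their images span after applying $\pi$, so knowing they span $\bar V$ does not by itself force them to span the possibly larger space $V$. The resolution is to bound $\dim V\le 2$ using the first half of the structure and to pin $\dim\bar V=2$ from below using non-cyclicity of Beauville groups, and then to transport linear independence along $\pi$. I would be careful to record explicitly why $G$ is non-cyclic, since this necessary feature of groups with a Beauville structure is exactly what excludes the degenerate possibility $\dim\bar V<\dim V$.
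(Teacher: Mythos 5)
Your proof is correct and follows essentially the same route as the paper: both reduce to Proposition~\ref{prop:trianglegrouplifting-non-p-group} and use the Frattini property together with a dimension count, the paper phrasing it as ``$\tilde{G}/\Phi(\tilde{G}) \surj G/\Phi(G)$ is an isomorphism of $2$-dimensional $\bF_p$-vector spaces'' where you transport linear independence of $\bar{\tilde a},\bar{\tilde b}$ backwards along that surjection. Your explicit justification of non-cyclicity via the unique subgroup of order $p$ is a detail the paper leaves implicit, but the argument is the same.
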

\begin{proof}
Since  $G$ admits a Beauville structure, the group is not cyclic and hence $G/\Phi(G)$ is an $\bF_p$-vector space of dimension $2$. The same holds for $\tilde{G}$, hence  the map $\tilde{G}/\Phi(\tilde{G}) \surj G/\Phi(G)$ is an isomorphism. By the Frattini property, the generating sets for $G$ are lifts of generating sets of $G/\Phi(G)$, thus lifts of generators for $G$ also necessarily generate $\tilde{G}$: the triple $(\tilde{a},\tilde{b},\tilde{c})$ also generates $\tilde{G}$.
Now the proof follows from Proposition~\ref{prop:trianglegrouplifting-non-p-group}.
\end{proof}

The (strict) triangle group of signature $p^m$, $p^n$, $p^r$ is the group 
\[
\Delta_{m,n,r} = \langle X,Y,Z | XYZ =1, X^{p^m} = Y^{p^n} = Z^{p^r} =1\rangle
\]
that we consider as a group 
with distinguished generators $X$, $Y$ and $Z$.

\begin{rmk} 
We refer to lifting of Beauville structures as in Proposition~\ref{prop:trianglegrouplifting} as \textbf{triangle group liftings} since we view both $G$ and $\tilde{G}$ via the datum of $(\tilde{x},\tilde{y},\tilde{z})$ as quotients of a triangle group
\[
\Delta_{m,n,r} \surj \tilde{G} \surj G.
\]
with $\ord(x) = p^m$, $\ord(y) = p^n$ and $\ord(z) = p^r$.
\end{rmk}

Recall that a \textbf{pro-finite} power of an element $g$ in a pro-finite group $G$ is any element of the pro-cyclic subgroup topologically generated by $g$ in $G$.

\begin{defi}
An \textbf{(unmixed topological) Beauville structure} on a pro-finite group $G$ 
consists of an ordered pair of triples $(x,y,z)$ and $(a,b,c)$ of group elements such that 
\begin{enumerate}
\item $xyz = 1 = abc$,
\item $G$ is topologically generated by $\{x,y,z\}$ and by $\{a,b,c\}$, 
\item no non-trivial (pro-finite) power of an element of $\{x,y,z\}$ is conjugate to a (pro-finite) power of an element of $\{a,b,c\}$.
\end{enumerate}
\end{defi}

If $G = \varprojlim_i G_i$ is a description as a surjective projective system of finite groups, then a compatible system of Beauville structures on the $G_i$ will describe a topological Beauville structure on $G$. The converse is not true in general: the image in $G_i$ of a  topological Beauville structure $(X,Y,Z)$ and $(A,B,C)$ on $G$ may fail to be a Beauville structure. If the orders of $X$, $Y$ and $Z$ in $G$ are finite, then
\[
M_{X,Y,Z}^0 := \bigcup_{n \in \bN, \ g \in G} g\{X^n,Y^n,Z^n\}g^{-1} \setminus \{1\}
\]
and also $M_{A,B,C} = \langle A \rangle \cup \langle B \rangle \cup \langle C \rangle$, the union of the subgroups generated topologically by $A$, $B$, and $C$ respectively, form compact subsets of $G$. Hence, the sets $M_{X,Y,Z}^0$ and $M_{A,B,C}$ are disjoint in $G$ if and only if there is a finite quotient $G \surj G_i$ such that the images there are disjoint. 
Then for all finer indices $j \to i$ the images of $(X,Y,Z)$ and $(A,B,C)$ in $G_j$ form indeed a compatible system of Beauville structures.

\smallskip

Let $(\Delta_{m,n,r})^{\wedge p}$ be the pro-$p$ completion  of $\Delta_{m,n,r}$. 

\begin{cor} \label{cor:topologicalBeauville}
Let  $(x,y,z)$ and $(a,b,c)$ be a Beauville structure on a finite $p$-group $G$ with orders $\ord(x) = p^m$, $\ord(y) = p^n$ and 
$\ord(z) = p^r$.  Then there is a Beauville structure on $(\Delta_{m,n,r})^{\wedge p}$ of the form  $(X,Y,Z)$ and $(A,B,C)$ such that the triples $(x,y,z)$ and $(a,b,c)$ are their image via a unique surjection $(\Delta_{m,n,r})^{\wedge p} \surj G$
\end{cor}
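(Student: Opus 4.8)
The plan is to view $(\Delta_{m,n,r})^{\wedge p}$ as the universal pro-$p$ source of the first half $(x,y,z)$ and then to repeat the triangle group lifting of Proposition~\ref{prop:trianglegrouplifting} in the pro-$p$ world. First I would build the surjection: since $xyz = 1$ and $\ord(x) = p^m$, $\ord(y) = p^n$, $\ord(z) = p^r$, the assignment $X \mapsto x$, $Y \mapsto y$, $Z \mapsto z$ respects the defining relations of $\Delta_{m,n,r}$ and defines a homomorphism $\Delta_{m,n,r} \to G$, surjective because $\{x,y,z\}$ generates $G$. As $G$ is a finite $p$-group this factors through a continuous surjection $\pi \colon (\Delta_{m,n,r})^{\wedge p} \surj G$, and $\pi$ is the unique such map because its source is topologically generated by the images $X,Y,Z$ of the generators. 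These images satisfy $XYZ = 1$ and topologically generate. The one genuinely pro-finite point here is order preservation: the relation $X^{p^m} = 1$ forces $\ord(X) \le p^m$, while $\pi(X) = x$ has order $p^m$, so $\ord(X) = p^m$, and likewise for $Y$ and $Z$. Thus $(X,Y,Z)$ is a first half of the same signature as $(x,y,z)$.

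Next I would lift the second half. Choose arbitrary lifts $A,B$ of $a,b$ and put $C = (AB)^{-1}$, so that $ABC = 1$ and, using $c = (ab)^{-1}$, also $\pi(C) = c$. To see that $A,B,C$ topologically generate, I would run the Frattini argument of Proposition~\ref{prop:trianglegrouplifting} verbatim: $G$ is non-cyclic (it carries a Beauville structure), so $G/\Phi(G)$ is two-dimensional over $\bF_p$; since $(\Delta_{m,n,r})^{\wedge p}$ is topologically two-generated (by $X$ and $Y$, as $Z = (XY)^{-1}$), the map $\pi$ induces an isomorphism on maximal elementary abelian $p$-quotients. As $a,b$ generate $G$ their images form a basis of $G/\Phi(G)$, hence $A,B$ map to a basis of the Frattini quotient of $(\Delta_{m,n,r})^{\wedge p}$, and the Frattini property for pro-$p$ groups gives that $A,B$, and a fortiori $A,B,C$, topologically generate.

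Finally I would check disjointness, condition (3) of a topological Beauville structure, by pushing everything down to $G$. Because $X,Y,Z$ have finite order, their pro-finite powers are ordinary powers, and order preservation gives $\pi(M_{X,Y,Z}^0) \subseteq M_{x,y,z}^0$: a non-trivial $X^i$ has $x^i \neq 1$, so $\pi(gX^ig^{-1})$ is a non-trivial conjugate of a power of $x$. Continuity of $\pi$ gives $\pi(M_{A,B,C}) \subseteq M_{a,b,c}$, since $\pi$ maps the closed procyclic group $\overline{\langle A\rangle}$ into $\overline{\langle a\rangle} = \langle a\rangle$ and similarly for $b,c$. Since $(x,y,z),(a,b,c)$ is a Beauville structure on $G$, the sets $M_{x,y,z}^0$ and $M_{a,b,c}$ are disjoint, so a common element of $M_{X,Y,Z}^0$ and $M_{A,B,C}$ would project into the empty intersection, a contradiction. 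Hence $(X,Y,Z)$ and $(A,B,C)$ satisfy (1)--(3) and form the desired topological Beauville structure.

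The routine content is an almost verbatim transcription of Propositions~\ref{prop:trianglegrouplifting-non-p-group} and~\ref{prop:trianglegrouplifting} to the pro-$p$ setting; the only step needing real care is the finite/pro-finite interface. Concretely, one must be sure that completing to the pro-$p$ group neither shrinks the orders of $X,Y,Z$ (settled by the squeeze $p^m = \ord(x) \le \ord(X) \le p^m$) nor lets the possibly infinite-order elements $A,B,C$, whose procyclic closures may be copies of $\bZ_p$, sneak a pro-finite power into conjugacy with a power of $X,Y,Z$. The latter is precisely why the verification of (3) is performed downstairs in $G$, where $a,b,c$ have finite order and the hypothesis applies directly.
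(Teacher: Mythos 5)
Your proof is correct, but it takes a genuinely different route from the paper's. The paper writes $(\Delta_{m,n,r})^{\wedge p} = \varprojlim_i \Delta_i$ with $\Delta_0 = G$ and lifts the second half level by level: at each finite stage Proposition~\ref{prop:trianglegrouplifting} shows that the set $\dB_{i+1}$ of Beauville structures on $\Delta_{i+1}$ of the form $(x_{i+1},y_{i+1},z_{i+1})$ and $(a',b',c')$ lifting one in $\dB_i$ is finite and non-empty (order preservation along the tower holds by the same squeeze you use), and then a compactness argument --- a projective limit of non-empty finite sets is non-empty --- produces a coherent $(A,B,C)$ in the limit. You instead lift $a,b$ once to the top, set $C=(AB)^{-1}$, and verify the three axioms directly in the pro-$p$ group, pushing axiom (3) down along $\pi$; this is legitimate, and is essentially Propositions~\ref{prop:trianglegrouplifting-non-p-group} and~\ref{prop:trianglegrouplifting} transplanted to the profinite setting, with the two finiteness points (finite order of $X,Y,Z$ makes profinite powers ordinary powers whose non-triviality is preserved; the procyclic closures of $A,B,C$ map onto $\langle a\rangle,\langle b\rangle,\langle c\rangle$) handled exactly where they are needed. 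What your shorter argument does not deliver, and what the paper's construction is engineered to give, is that $(A,B,C)$ restricts to a Beauville structure on \emph{every} finite quotient $\Delta_i$: the proof of Theorem~\ref{thmABC:pro-system} quotes precisely this compatible-family feature of the corollary's proof, together with the constancy of the first half's signature. With your version Theorem~\ref{thmABC:pro-system} is still recoverable, but only by additionally invoking the compactness remark preceding the corollary (disjointness of the compact sets $M^0_{X,Y,Z}$ and $M_{A,B,C}$ is detected in some finite quotient and then persists at all finer levels), so that step would have to be added. As a small bonus, your argument proves the sharper statement that \emph{arbitrary} lifts $A,B$ of $a,b$ with $C=(AB)^{-1}$ always work, the exact profinite analogue of Proposition~\ref{prop:trianglegrouplifting}.
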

\begin{proof}
The assignment $X,Y,Z \mapsto x,y,z$ determines uniquely a surjection 
\[
(\Delta_{m,n,r})^{\wedge p} \surj G.
\] 
We write $(\Delta_{m,n,r})^{\wedge p} = \varprojlim_i \Delta_i$ as a projective limit of finite groups and index system $(\bN,<)$. We assume $\Delta_0 = G$. Let $(x_i,y_i,z_i)$ be the image of $(X,Y,Z)$. 

Now set $\dB_0 = \{(x,y,z) \text{ and } (a,b,c)\}$ for the set containing only the  initial Beauville structure. 
By Proposition~\ref{prop:trianglegrouplifting} the set $\dB_{i+1}$ of Beauville structures on  $\Delta_{i+1}$ of the form $(x_{i+1},y_{i+1},z_{i+1})$ and $(a',b',c')$ lifting a Beauville structure in $\dB_i$ is non-empty and anyway finite. The sets $\dB_i$ form a projective system and a standard compactness argument shows that $\varprojlim_i \dB_i$ is non-empty as well. Moreover, the inductive construction of the $\dB_i$ shows that $\varprojlim_i \dB_i \to \dB_0$ is surjective. Any element in  $\varprojlim_i \dB_i$ is described by two tuples in  $(\Delta_{m,n,r})^{\wedge p}$, namely the standard generators $(X,Y,Z) = \varprojlim_i (x_i,y_i,z_i)$ and some $(A,B,C)$ that form a topological Beauville structure in  $(\Delta_{m,n,r})^{\wedge p}$ mapping to the given one on for $G = \Delta_0$ as claimed.
\end{proof}

\begin{proof}[Proof of Theorem~\ref{thmABC:pro-system}]
The topological Beauville structure of Corollary~\ref{cor:topologicalBeauville} was constructed as a compatible family of Beauville structures on the projective system of the $\Delta_i$, and moreover so that the signature of the first half of the Beauville structure stays constant. 

It remains to show that $(\Delta_{m,n,r})^{\wedge p}$ is infinite. But we work under the assumption that there is a $p$-group $G$ with a Beauville structure where the orders of the first half are $p^m$, $p^n$ and $p^r$, hence the corresponding quotient $\pi : \Delta_{m,n,r} \surj G$ is smooth, i.e., $\ker(\pi)$ is the fundamental group of a compact Riemann surface of genus $g$ with 
\[
2g - 2 = \#G \cdot \big(1 - \frac{1}{p^n} - \frac{1}{p^m} - \frac{1}{p^r}\big)
\] 
by the Riemann-Hurwitz formula. By Proposition~3.2 of \cite{bcg:withoutreal}
the signature $(p^n,p^m,p^r)$ is hyperbolic (for Theorem~\ref{thmABC:pro-system} it suffices that the signature is different from $(2,2,2^n)$ when $G$ is easily seen to be a dihedral group and thus not admitting a Beauville structure). It follows that $g \geq 2$ and $\ker(\pi)$ is a non-abelian surface group. Therefore its pro-$p$ completion $\ker(\pi)^{\wedge p}$ is infinite and therefore  also  $(\Delta_{m,n,r})^{\wedge p}$ is infinite. This completes the proof of Theorem~\ref{thmABC:pro-system}.
\end{proof}

\subsection{Some Beauville structures on \texorpdfstring{$p$}{p}-groups} 
The use of Theorem~\ref{thmABC:pro-system} is limited to the extent that the construction is not explicit (although quite flexible), and also subject to knowing $p$-groups with Beauville structures to start with. For completeness here are some examples.

\begin{thm}[Bauer, Catanese, Grunewald \cite{bcg:withoutreal} Thm.~3.4] \label{thm:abelianBeauvillestructures}
An abelian \\ group $G$ admits a Beauville structure if and only if $G \simeq \bZ/n\bZ \times \bZ/n\bZ$ and $(6,n) = 1$.
\end{thm}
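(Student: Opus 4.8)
The plan is to classify abelian Beauville groups by first reducing to $p$-groups and then analysing the finite geometry of the socle $G[p]$, the subgroup of elements of order dividing $p$.

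First I would reduce to $p$-groups. Since $z = (xy)^{-1}$ and $c = (ab)^{-1}$, any abelian Beauville group $G$ is generated by two elements, hence $G \cong \prod_p G_p$ with each $p$-Sylow $G_p \cong \bZ/p^s\bZ \times \bZ/p^t\bZ$. Writing $G = \prod_p G_p$, the condition $xyz = 1$, the generation condition, and the disjointness condition (3) each hold in $G$ if and only if they hold after projection to every $G_p$, using that $\langle u\rangle = \prod_p \langle u_p\rangle$ and that subgroups of coprime order meet trivially. Thus $G$ carries a Beauville structure if and only if every $G_p$ does, so it suffices to show that an abelian $p$-group admits a Beauville structure precisely when it is $(\bZ/p^a\bZ)^2$ with $p \geq 5$.

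For necessity, fix $G = \bZ/p^s\bZ \times \bZ/p^t\bZ$ with $s \leq t$. I would work in $G[p]$, which is a $2$-dimensional $\bF_p$-vector space once $s \geq 1$, carrying $p+1$ lines. To each $u \neq 1$ I attach $L(u)$, the unique order-$p$ subgroup inside $\langle u\rangle$. The decisive observation is that condition (3) forces $L(u) \neq L(v)$ whenever $u$ is in the first triple and $v$ in the second, since a common line would lie in $\langle u\rangle \cap \langle v\rangle$. The group cannot be cyclic (in a cyclic $p$-group any two nontrivial cyclic subgroups meet), so $s \geq 1$. To rule out $s < t$ I would note that every generating triple contains an element of maximal order $p^t$, and that every such element has the same socle line $L_0 = \{1\} \times (\bZ/p^t\bZ)[p]$, because $p^{t-1}$ annihilates the first factor once $t-1 \geq s$. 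Both triples then share $L_0$, contradicting the implication above, so $s = t =: a$. When $s = t$, multiplication by $p^{a-1}$ induces an isomorphism $G/pG \xrightarrow{\sim} G[p]$ sending the line of $\bar u$ to $L(u)$, and each generating triple with product $1$ reduces in $G/pG \cong (\bF_p)^2$ to three pairwise independent vectors, hence three distinct lines. Disjointness of the two triples then gives $3 + 3 \leq p + 1$, so $p \geq 5$.

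For sufficiency I would exhibit, for $p \geq 5$ and every $a$, two triples in $(\bZ/p^a\bZ)^2$ of elements all of maximal order $p^a$, each summing to $0$, whose six associated lines in $(\bF_p)^2$ are distinct; this is possible exactly because $\bP^1(\bF_p)$ has $p + 1 \geq 6$ points. A concrete choice takes the first triple $(1,0),(0,1),(-1,-1)$ and a second triple $(1,c_1),(1,c_2),(-2,-c_1-c_2)$ with $c_1, c_2, (c_1+c_2)/2$ distinct and avoiding $\{0,1\}$ (possible for $p \geq 5$, and $-2$ is a unit as $p \neq 2$). Since all six elements have maximal order and cross-triple reductions are independent, an order count gives $|\langle u\rangle \cap \langle v\rangle| = |\langle u\rangle|\,|\langle v\rangle|\,|\langle u,v\rangle|^{-1} = 1$, so the full condition (3) holds, not merely its socle shadow; generation is immediate. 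Reassembling over primes by the Chinese remainder theorem yields a Beauville structure on $(\bZ/n\bZ)^2$ exactly when $(6,n) = 1$. The main obstacle is the interplay between the top quotient $G/pG$ and the socle $G[p]$: the naive count of six lines in $G/pG$ fails when $s \neq t$, since a common image line there need not force a nontrivial intersection of cyclic subgroups, so the exclusion of $s < t$ must be done in the socle via the coincidence $L_0$, and only after $s = t$ is established do the two pictures agree. A secondary subtlety is that disjointness of socle lines alone is weaker than condition (3); one upgrades it in the sufficiency step using that all chosen elements have maximal order.
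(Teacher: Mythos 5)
The paper contains no proof of this statement to compare against: it is imported verbatim as Theorem~3.4 of Bauer--Catanese--Grunewald \cite{bcg:withoutreal}, with only the citation standing in for an argument. Judged on its own merits, your proof is correct and complete, and it can serve as a self-contained replacement for the citation. The reduction to $p$-groups is valid, since for abelian groups condition (3) is exactly $\langle u\rangle \cap \langle v\rangle = 1$ for all cross pairs, and generation, the product condition, and these intersections all decompose over primary components via $\langle u\rangle = \prod_p \langle u_p\rangle$. Your necessity argument is sound at each step: nontrivial cyclic $p$-groups are excluded because any two nontrivial subgroups meet; the case $s<t$ is excluded by the correct observations that every generating triple of an abelian $p$-group contains an element of maximal order $p^t$ and that all such elements share the socle line $\{0\}\times(\bZ/p^t\bZ)[p]$ once $t-1\geq s$; and for $s=t$ all six elements of the two triples are forced to have maximal order (their images must be nonzero in $G/pG$), so the six socle lines are pairwise distinct and $p+1\geq 6$ gives $p\geq 5$. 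The sufficiency construction also checks out: with $c_1\neq c_2$ and $c_1,c_2,(c_1+c_2)/2 \notin \{0,1\}$ (possible for every $p\geq 5$, e.g.\ $c_1=2$, $c_2=4$ when $p=5$), the six reductions give six distinct lines in $\bP^1(\bF_p)$, and the order count $|\langle u\rangle \cap \langle v\rangle| = |\langle u\rangle|\,|\langle v\rangle|/|\langle u,v\rangle| = p^{2a}/p^{2a} = 1$ correctly upgrades distinctness of lines to the full condition (3), precisely because both elements have maximal order and so generate all of $G$ together. Your closing remarks on the two pitfalls (the socle versus the Frattini quotient when $s\neq t$, and line-disjointness being a priori weaker than condition (3)) identify exactly the places where a sloppier argument would fail. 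In spirit this is the same rank-two-homocyclic-plus-projective-line-count strategy as the original source, so there is nothing to reconcile; the only cosmetic remark is that distinctness of $c_1$, $c_2$, $(c_1+c_2)/2$ already follows from $c_1\neq c_2$ alone, so that hypothesis is redundant as stated.
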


The following non-abelian examples of balanced  signature $p$ generalize examples from 
Barker, Boston and Fairbairn~\cite{bbf:smallpgroups}.

\begin{prop} \label{prop:subofgeneralizedheisenberg}
Let $p \geq 5$ be a prime number. 
A $p$-group $G$ that can be embedded in $\GL_p(\bF_p)$ admits a Beauville structure if and only if $\dim_{\bF_p} G/\Phi(G) = 2$, that is, if and only if $G$ is generated by $2$ elements but is not cyclic.
\end{prop}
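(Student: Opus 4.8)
The plan is to prove the two implications separately, putting essentially all of the work into the ``if'' direction; the ``only if'' direction is formal and uses neither the embedding nor the hypothesis $p \geq 5$. Suppose first that $G$ carries a Beauville structure $(x,y,z)$, $(a,b,c)$. Since $z = (xy)^{-1}$, the set $\{x,y,z\}$ is already generated by $x,y$, so $G$ is $2$-generated and $\dim_{\bF_p} G/\Phi(G) \leq 2$. If $G$ were cyclic, every element, in particular all of $x,y,z,a,b,c$, would be a power of a fixed generator, violating condition (3). Hence $G$ is non-cyclic and $\dim_{\bF_p} G/\Phi(G) = 2$.

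For the converse, assume $\dim_{\bF_p} G/\Phi(G) = 2$ and $G \hookrightarrow \GL_p(\bF_p)$. First I would reduce to the unipotent case: a $p$-subgroup lies in a Sylow $p$-subgroup, and the upper unitriangular group $U_p(\bF_p)$ is a Sylow $p$-subgroup of $\GL_p(\bF_p)$, so after conjugating we may assume $G \leq U_p(\bF_p)$. The key structural fact is that $U_p(\bF_p)$ has exponent $p$: for $u = 1 + N$ with $N$ a strictly upper triangular $p \times p$ matrix one has $N^p = 0$, and since $\Char = p$ all the binomial coefficients $\binom{p}{k}$ with $1 \leq k \leq p-1$ vanish, so $u^p = 1 + N^p = 1$. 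Consequently every non-identity element of $G$ has order exactly $p$, and any Beauville structure I construct will be balanced of signature $p$.

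Now set $\bar{G} = G/\Phi(G) \cong \bF_p^2$ and observe that conjugation acts trivially on $\bar{G}$, because $[G,G] \subseteq \Phi(G)$. The crucial reduction is that, since all non-trivial elements have order $p$, condition (3) is implied by a statement about the $p+1$ lines of $\bF_p^2$: if $x^n$ with $x^n \neq 1$ were conjugate to some $a^m$, then passing to $\bar{G}$ gives $n\bar{x} = m\bar{a}$; as $\bar{x} \neq 0$ and $1 \leq n \leq p-1$, the left side is a non-zero vector on the line $\bF_p\bar{x}$, forcing $m\bar{a} \neq 0$ as well, so both images lie on a common line. It therefore suffices to arrange that $\bar{x},\bar{y},\bar{z}$ and $\bar{a},\bar{b},\bar{c}$ are non-zero and lie on six pairwise distinct lines. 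Because $p \geq 5$ gives $p+1 \geq 6$ lines, I would choose a triple of distinct lines for the first half and a disjoint triple for the second half (the remaining $p-2 \geq 3$ lines suffice), and for each triple pick representatives summing to $0$: for three distinct lines $\langle v_1\rangle, \langle v_2\rangle, \langle w\rangle$ one writes $w = \alpha v_1 + \beta v_2$ with $\alpha,\beta \neq 0$, whence $\alpha v_1, \beta v_2, -w$ sum to zero. Lifting $\bar{x},\bar{y}$ arbitrarily to $x,y \in G$ and setting $z = (xy)^{-1}$ gives $xyz = 1$ with $\bar{z}$ on the prescribed third line, and similarly for $a,b,c$.

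It remains to verify the two defining properties. Generation follows from the Frattini property: within each triple two of the three images lie on distinct lines, hence are $\bF_p$-linearly independent, so the corresponding elements generate $G$. The non-conjugacy condition (3) follows from the line argument above, since the two triples use disjoint sets of lines and all six images are non-zero. This yields a Beauville structure of balanced signature $p$, completing the proof. I expect the main conceptual step to be the reduction of condition (3) to projective geometry in $\bF_p^2$, combining the trivial conjugation action on $G/\Phi(G)$ with the exponent-$p$ property; the role of $p \geq 5$ is precisely to furnish the six distinct lines, which also explains why the statement must fail for $p \in \{2,3\}$.
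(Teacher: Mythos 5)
Your proof is correct, and it follows the same overall skeleton as the paper's: show every nontrivial element of $G$ has order $p$, then produce the Beauville structure by lifting one from the Frattini quotient $G/\Phi(G)\cong\bF_p^2$. The difference lies in what is invoked versus what is proved. The paper gets exponent $p$ from the Jordan normal form (you use the Sylow reduction to the unitriangular group --- equivalent), then cites two black boxes: Theorem~3.4 of Bauer--Catanese--Grunewald for the existence of a Beauville structure on $(\bZ/p\bZ)^2$ when $p\geq 5$, and its own triangle-group-lifting result (Proposition~\ref{prop:trianglegrouplifting}) to pull that structure back to $G$, the order-preservation hypothesis being automatic in exponent $p$. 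You instead open both boxes and replace them by a single self-contained argument: the six-distinct-lines construction in $\bF_p^2$ re-proves the abelian case, and your observation that conjugation acts trivially on $G/\Phi(G)$ (since $[G,G]\subseteq\Phi(G)$) lets you verify the non-conjugacy condition directly upstairs, which is exactly the mechanism hidden inside the paper's Proposition~\ref{prop:trianglegrouplifting-non-p-group} specialized to this situation. What the paper's route buys is brevity and reuse of machinery it needs elsewhere; what yours buys is an explicit, elementary, and fully self-contained proof that makes the role of $p\geq 5$ transparent (there must be at least six lines in $\bP^1(\bF_p)$) and yields as a byproduct that all resulting structures are balanced of signature $p$. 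One caveat: your closing claim that the line count ``explains why the statement must fail for $p\in\{2,3\}$'' is only heuristic --- the failure of your construction does not by itself rule out Beauville structures on such groups (that requires the converse reduction, as in the paper's proof of Theorem~\ref{thmABC:uniform} or the cited classification of abelian Beauville groups) --- but this remark lies outside the proposition being proved.
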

\begin{proof}
If $G$ admits a Beauville structure, then $G$ is not cyclic and $G/\Phi(G)$ is of dimension $2$.
For the converse direction we consider the Jordan normal form of $g \in G$ that has Jordan blocks of length at most $p$, hence all nontrivial elements in $G$ are of order $p$. Therefore we can lift a Beauville structure along 
$G \surj G/\Phi(G) \cong \bZ/p\bZ \times \bZ/p\bZ$
by triangle group lifting as in Proposition~\ref{prop:trianglegrouplifting}. The Frattini quotient admits a 
Beauville structure by  \cite[Theorem 3.4]{bcg:withoutreal}  as recalled above.
\end{proof}

\section{An example of a uniform group with a Beauville structure}
\label{sec:uniform}

By $\bZ_p$ we denote the $p$-adic integers. 
For an element $\lambda \in (\bZ/p^m\bZ)^\times$ of order dividing $p^n$ we consider the semi-direct product 
\[
G = \bZ/p^m\bZ \rtimes_\lambda \bZ/p^n\bZ
\]
with action $\bZ/p^n\bZ \to  \Aut(\bZ/p^m\bZ)$ sending $1 \mapsto \lambda$. We choose a lift denoted also by $\lambda \in \bZ_p^\times$ and consider the analoguous pro-finite semi-direct product
\[
\Gamma = \bZ_p \rtimes_\lambda \bZ_p.
\]

\begin{lem} \label{lem:magicofuniformgroups}
The subsets $\Gamma_r := p^r \bZ_p \rtimes_\lambda p^r \bZ_p \subseteq \Gamma$ (where $p^r$ acts by $\lambda^{p^r}$) form an exhaustive sequence of normal subgroups.  More precisely, the following holds.
\begin{enumerate}
\item  If $p$ is odd, or if $p=2$ and $\lambda \equiv 1 \mod 4$, then 
$\Gamma_r = \Gamma^{p^r}$ is the set of $p^r$-th powers for all $r \geq 0$. Moreover, the map $\gamma \mapsto P_s(\gamma) :=\gamma^{p^s}$ induces for all $r,s \geq 0$  a group isomorphism
\[
P_s: \Gamma_r/\Gamma_{r+1} \xrightarrow{\sim} \Gamma_{r+s}/\Gamma_{r+s+1}
\]
of groups isomorphic to $\bZ/p\bZ \times \bZ/p\bZ$.
\item If $p=2$ and  $\lambda \equiv -1 \mod 4$, then for $r \geq 1$ we have $\Gamma^{2^r} \subseteq \Gamma_r = \Gamma_1^{2^{r-1}}$. Moreover, the map $\gamma \mapsto P_s(\gamma) := \gamma^{p^s}$ induces for all $s \geq 0$ and $r \geq 1$ a group isomorphism
\[
P_s: \Gamma_r/\Gamma_{r+1} \xrightarrow{\sim} \Gamma_{r+s}/\Gamma_{r+s+1}
\]
of groups isomorphic to $\bZ/p\bZ \times \bZ/p\bZ$.
\end{enumerate}
\end{lem}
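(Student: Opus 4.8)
The plan is to work throughout in the explicit coordinates of $\Gamma$, writing an element as a pair $(a,b)$ with $a,b \in \bZ_p$ and multiplication $(a,b)(a',b') = (a + \lambda^b a',\, b+b')$, and to reduce every assertion to the $p$-adic valuation of $\lambda^m - 1$. First I would establish by induction the power formula
\[
(a,b)^n = \big(a \cdot S_n(b),\ nb\big), \qquad S_n(b) = \sum_{k=0}^{n-1} \lambda^{kb} = \frac{\lambda^{nb}-1}{\lambda^b - 1}
\]
(with the convention $S_n(0) = n$), which already shows that $\Gamma$ is torsion free and that the second coordinate is a continuous homomorphism $\Gamma \surj \bZ_p$. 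The heart of the matter is the lifting-the-exponent identity $v_p(\lambda^m - 1) = v_p(\lambda-1) + v_p(m)$, which holds for \emph{all} $m$ exactly in the setting of case (1) (odd $p$, or $p=2$ with $v_2(\lambda-1)\geq 2$, i.e. $\lambda \equiv 1 \bmod 4$), and which fails at the prime $2$ when $\lambda \equiv -1 \bmod 4$, where the correction term $v_2(\lambda+1)$ enters. This dichotomy is precisely the origin of the two cases.

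Before splitting into cases I would dispatch the facts common to both. Since $\lambda^b \in \bZ_p^\times$, the set $\Gamma_r = p^r\bZ_p \rtimes_\lambda p^r\bZ_p$ is closed under multiplication and inversion, hence an open subgroup of index $p^{2r}$; conjugating $(a,b) \in \Gamma_r$ by an arbitrary $(\alpha,\beta)$ yields $\big(\lambda^\beta a + \alpha(1-\lambda^b),\, b\big)$, and because $v_p(1-\lambda^b) \geq v_p(b) \geq r$ in both regimes, the conjugate again lies in $\Gamma_r$, so $\Gamma_r$ is normal in $\Gamma$. The $\Gamma_r$ visibly descend with trivial intersection and finite index, hence form an exhaustive (cofinal, Hausdorff) filtration.

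For case (1) the key computation is that the valuation identity forces $v_p(S_{p^k}(b)) = k$ for every $b$ and every $k \geq 0$, and moreover $S_{p^k}(b)/p^k \equiv 1 \bmod p$; this last point I would get by expanding $S_{p^k}(b) = \sum_{j=1}^{p^k} \binom{p^k}{j}(\lambda^b - 1)^{j-1}$ and checking that every term beyond the leading $p^k$ has valuation $> k$. From $v_p(S_{p^r}(b)) = r$ I read off both inclusions at once: $(a,b)^{p^s} = (a S_{p^s}(b), p^s b) \in \Gamma_{r+s}$ whenever $(a,b) \in \Gamma_r$, giving $\Gamma^{p^s} \subseteq \Gamma_s$; and conversely, given $(c,d) \in \Gamma_r$ one solves $p^r b = d$ and $a = c/S_{p^r}(b)$ in $\bZ_p$ using that $S_{p^r}(b)$ is $p^r$ times a unit, proving $\Gamma_r \subseteq \Gamma^{p^r}$ and hence equality. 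The quotient $\Gamma_r/\Gamma_{r+1}$ is abelian and isomorphic to $\bZ/p\bZ \times \bZ/p\bZ$ because the twist $(\lambda^b-1)a'$ lies in $p^{2r+1}\bZ_p \subseteq p^{r+1}\bZ_p$. Finally, using $S_{p^s}(b)/p^s \equiv 1$ and $\lambda^b \equiv 1 \bmod p$, a direct comparison of $(\gamma\delta)^{p^s}$ with $\gamma^{p^s}\delta^{p^s}$ shows that modulo $\Gamma_{r+s+1}$ the map $P_s$ is a homomorphism which, in the coordinates $(p^r\bar a, p^r\bar b) \mapsto (\bar a, \bar b) \bmod p$, is simply the identity — hence the asserted isomorphism.

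Case (2) I would deduce from case (1) by passing to $\Gamma_1$. Rescaling by $2$ identifies $\Gamma_1$ with $\bZ_2 \rtimes_{\lambda^2} \bZ_2$, and $\lambda \equiv -1 \bmod 4$ forces $\lambda^2 \equiv 1 \bmod 8$, so $\Gamma_1$ falls under case (1) with parameter $\lambda^2$; its filtration satisfies $(\Gamma_1)_{r-1} = \Gamma_r$, whence case (1) applied to $\Gamma_1$ gives simultaneously $\Gamma_r = \Gamma_1^{2^{r-1}}$ and the isomorphisms $P_s\colon \Gamma_r/\Gamma_{r+1} \xrightarrow{\sim} \Gamma_{r+s}/\Gamma_{r+s+1}$ for all $r \geq 1$. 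The remaining inclusion $\Gamma^{2^r} \subseteq \Gamma_r$ I would verify directly from the power formula, separating $b$ odd (where $v_2(S_{2^r}(b)) = v_2(\lambda+1) + r - 1 \geq r+1$) from $b$ even (where it equals $r$); the same computation explains why the inclusion is strict, since an element of $\Gamma_r$ with both coordinates of valuation exactly $r$ cannot be a $2^r$-th power. The main obstacle throughout is the valuation bookkeeping at $p=2$: one must track the correction term $v_2(\lambda+1)$ carefully, as it is exactly this term that breaks uniformity for $\lambda \equiv -1 \bmod 4$ and produces the index shift $r \mapsto r-1$.
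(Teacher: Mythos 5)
Your proof is correct and takes essentially the same route as the paper: both rest on the explicit power formula $(a,b)^n = (a\,S_n(b),\,nb)$ and on the fact that $S_{p^k}(b)$ equals $p^k$ times a unit congruent to $1 \bmod p$ exactly in the regime of case (1) — the paper does this in a single step via $\ep_\lambda(x) = S_p(x)/p \equiv 1 + \mu x \binom{p}{2} \bmod p$ and iterates, where you expand $S_{p^k}(b)$ directly with the lifting-the-exponent identity. Your treatment of case (2) by rescaling $\Gamma_1 \cong \bZ_2 \rtimes_{\lambda^2} \bZ_2$ and re-invoking case (1) is a tidy repackaging of the paper's observation that the unit condition ($2 \mid \mu x$) holds automatically on $\Gamma_1$, so the two arguments coincide in substance.
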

\begin{proof}
Since $\lambda$ has $p$-primary order, we must have $\lambda \equiv 1$ modulo $p$. It follows by induction that $\lambda^{p^r} \equiv 1$ modulo $p^{r+1}$. This shows that $\Gamma_r$ is indeed a normal subgroup, namely the kernel of the natural map $\Gamma \surj \bZ/p^r\bZ \rtimes_\lambda \bZ/p^r\bZ$.

Let $\mu \in \bZ_p$ be such that  $\lambda \equiv  1 + \mu p$ modulo $p^2$  and so $\lambda^x  \equiv 1 + p\mu x$ modulo $p^2$. 
The $p$-th power map $\Gamma \to \Gamma$ is then the $p$-adic analytic map 
\[
(a,x) \mapsto p \cdot (a,x) = (a \cdot (1 + \lambda^x + \lambda^{2x} + \ldots + \lambda^{(p-1)x}),px)  = (pa\cdot \ep_\lambda(x), px)
\]
where $\ep_\lambda(x)$ is a $p$-adic analytic function $\ep_\lambda : \bZ_p \to \bZ_p$ computed as
\[
\ep_\lambda(x) = \frac{1}{p} \cdot \sum_{i=0}^{p-1} \lambda^{ix} \equiv 1 + \mu x \binom{p}{2}   \equiv  
\left\{
\begin{array}{cl}
1 & \text{if  $p>2$ } \\
1+\mu x & \text{ if $p=2$ }
\end{array}
\right.
\mod p.
\]
If $p$ is odd, or if $p=2$ and $2 \mid \mu x$, then $\ep_\lambda(x)$ takes values in  $1+p\bZ_p$.
Now both assertions follow at once. 
\end{proof}

\begin{proof}[Proof of Theorem~\ref{thmABC:uniform}]
We first assume that $\lambda \equiv 1 \mod 4$ if $p=2$. 

We make use of the quotient map $\Gamma \surj G$ sending $(a,x)$ to $(a,x)$ that exists by the assumptions on $\lambda$. Let $r = \min\{n,m\}$ and assume first that $n\not=m$. Then the image $G_r \subseteq G$ of 
\[
\Gamma_r \to \Gamma \to G
\]
is a non-trivial cyclic subgroup. Any generating system $(g_1,\ldots,g_s)$ of $G$ maps to a generating system of $G/\Phi(G)$. By lifting to $\Gamma/\Gamma_1 \surj G/\Phi(G)$ we control that the powers $g_i^{p^r} \in G_r$ generate $G_r$, hence at least one of them is non-trivial. It follows that no two generating systems of $G$ can have disjoint sets of non-trivial powers. This shows that in order for $G$ to admit a Beauville structure we must have $n=m$.

From now on we assume that $n=m$. Let $(x,y,z)$ be a generating triple in $G$ with $xyz = 1$. Then image triple $(\bar x, \bar y, \bar z)$ in $G/\Phi(G) = \Gamma/\Gamma_1$ generates this $2$-dimensional $\bF_p$ vector space, hence all three elements are nontrivial. Since by Lemma~\ref{lem:magicofuniformgroups} then also their $p^{n-1}$-th powers are non-trivial, we deduce that all three have order $p^n$.   This proves that any potential Beauville structure must be balanced of constant signature $p^n$. 

Moreover, let $(x,y,z)$ and $(a,b,c)$ be a Beauville structure on $G$, we see that the $p^{n-1}$-th powers of these elements must yield a Beauville structure on $\Gamma_{n-1}/\Gamma_n \simeq \bZ/p\bZ \times \bZ/p\bZ$. 
In view of Theorem~\ref{thm:abelianBeauvillestructures} there cannot be a Beauville structure if $p < 5$. It remains to construct a Beauville structure if $p \geq 5$, and to exclude the case $p=2$ and $\lambda \equiv -1 \mod 4$.

\smallskip

We assume now that $p=2$ and $\lambda \equiv -1 \mod 4$, and we assume that $G$ has a Beauville structure $(x,y,z)$ and $(a,b,c)$. Note that in this case we do not yet know that $n=m$. Then the Frattini quotient $G/ \Phi(G)$ must be of order $4$ with both $(x,y,z)$ and $(a,b,c)$ mapping bijectively to the nontrivial elements. We set $G_r$ for the image of the map $\Gamma_r \to G$, and may assume without loss of generality that $x = a \cdot \delta$ with $\delta \in G_1$. 

More generally, if $g,h \in G_r$ differ by $\ep = h^{-1} g  \in G_{r+1}$, then 
\[
g^2 = (h \ep)^2 = h^2 (h^{-1}\ep h \ep^{-1}) \ep^2
\]
and so $g^2,h^2 \in G_{r+1}$ differ by $(h^{-1}\ep h \ep^{-1}) \ep^2 \in G_{r+2}$, because $G_r/G_{r+1}$ is central in $G/G_{r+1}$. By induction we find
\[
x^{2^{r-1}} \equiv a^{2^{r-1}} \mod G_{r}
\]
which contradicts the properties of a Beauville structure for $s = \max\{n,m\}$, when $G_s = 0$, if at least one of the elements $x,y,z,a,b,c$ has order $2^s$. 

Working modulo $4$ and using $\lambda \equiv -1 \mod 4$, we see that squares are either $(2,0)$ or $(0,2)$ modulo $G_2$, and among the elements $x,y,z$ both possibilities occur. By induction for any $g \in G$ we have, if $g^2 \equiv (2,0)$ that $g^{2^{r-1}} \equiv (2^{r-1},0) \mod G_r$, or if $g^2 \equiv (0,2)$ that $g^{2^{r-1}} \equiv (0,2^{r-1}) \mod G_r$. This shows that at least one element of every generating set has order $2^s$ with $s = \max\{n,m\}$. This excludes the case $p=2$ from allowing Beauville structures.

\smallskip

Let now $p \geq 5$ and  $(x,y,z)$ and $(a,b,c)$ be triples with product equal to one that map to a Beauville structure on $G/\Phi(G)$, in particular to generating triples of $G/\Phi(G)$. Then both triples $(x,y,z)$ and $(a,b,c)$ generate $G$ and it remains to show that there are no non-trivial conjugate powers. We argue by contradiction and assume without loss of generality that 
\[
0 \not= x^k = ga^lg^{-1}
\]
with $k,l \in \bN$ and $g \in G$. Let $r$ be maximal such that $x^k \in \Gamma_r/\Gamma_n \subseteq G$. By 
Lemma~\ref{lem:magicofuniformgroups} it follows that $p^r$ is the precise power dividing both $k$ and $l$. Let $k = p^r k_0$ and $l = p^r l_0$. Exploiting the isomorphism 
\[
P_r: G/\Phi(G) = \Gamma/\Gamma_1 \xrightarrow{\sim} \Gamma_r/\Gamma_{r+1}
\]
of raising to the $p^{r}$-th power of Lemma~\ref{lem:magicofuniformgroups}  we obtain  the equality 
\[
P_r(x^{k_0}) \equiv x^k \equiv g a^l g^{-1} \equiv g P_r(a^{l_0}) g^{-1} \equiv P_r(g a^{l_0} g^{-1})  \mod \Gamma_{r+1}
\]
of nontrivial elements in $\Gamma_r/\Gamma_{r+1}$ by the choice of $r$.
Since $P_r$ is an isomorphism, we conclude
\[
0 \not= x^{k_0} = ga^{l_0} g^{-1} \in G/\Phi(G).
\]
This contradicts our initial choice that $(x,y,z)$ and $(a,b,c)$ were a lift of a Beauville structure on $G/\Phi(G)$. This completes the proof.
\end{proof}

\begin{rmk}
(1) 
The proof of Theorem~\ref{thmABC:uniform} given above shows more. Every Beauville structure maps to a Beauville structure of 
$G/\Phi(G)$. And, conversely, any lift of a Beauville structure of the Frattini quotient yields a Beauville structure on $G$.

(2) 
The groups $\bZ/p^n\bZ \rtimes_\lambda \bZ/p^n\bZ$ are classified for $p \not= 2$ up to isomorphism as follows. The group order is $p^{2n}$, hence $p$ and $n$ are unique. The group is abelian if $\lambda \equiv 1 \mod p^n$ and otherwise the maximal abelian quotient 
\[
\bZ_p/(p^n, \lambda-1)\bZ_p \times \bZ/p^n\bZ
\]
has order $p^{n+s}$ where $0 < s = v_p(\lambda-1) < n$ is the $p$-adic valuation of any lift of $\lambda-1$ to $\bZ_p$. It follows that $n > r = n-s>0$ is an invariant of the group in the non-abelian case, namely the order of $\lambda \in \Aut(\bZ/p^n\bZ)$. 

The tuple $(p,n,r)$ is a complete set of invariants, since for $\lambda'$ leading to the same invariants there is an automorphism $\ph$ of $\bZ/p^n\bZ$ such that 
\[
(\id, \ph) : \bZ/p^n\bZ \rtimes_\lambda \bZ/p^n\bZ \xrightarrow{\sim} \bZ/p^n\bZ \rtimes_{\lambda'} \bZ/p^n\bZ
\]
yields an isomorphism. 

This classification shows that we have described plenty of Beauville structures on a family of non-abelian $p$-groups parametrized by integers $n > r > 0$.
\end{rmk}



\begin{thebibliography}{[X}

\bibitem{bbf:smallpgroups}
Barker, N., Boston, N., Fairbairn, B.,
\textit{A note on Beauville $p$-groups},
{Exp.\ Math.} \textbf{21} (2012), no.\ 3, 298--306. 

\bibitem{bbpv:newbeuaville}
Barker, N., Boston, N., Peyerimhoff, N., Vdovina, A.,
\textit{New examples of Beauville surfaces},
{Monatsh.\ Math.} \textbf{166} (2012), no.\ 3-4, 319--327. 

\bibitem{bbpv:two-groups}
Barker, N., Boston, N., Peyerimhoff, N., Vdovina, A.,
\textit{An Infinite Family of 2-Groups with Mixed Beauville Structures},
{Int.\ Math.\ Res.\ Notices}, \textbf{2015(11)} (2015), 3598--3618. 

\bibitem{bcg:withoutreal}
Bauer, I., Catanese, F., Grunewald, F.,
\textit{Beauville surfaces without real structures},
in: \textit{Geometric methods in algebra and number theory}, Bogomolov, F., Tschinkel, Y. (editors), Progr.\ Math.\ \textbf{235}, Birkh\"auser Boston, 2005, 1--42.

\bibitem{bcg08}
Bauer, I.C.,  Catanese, F., Grunewald, F.,
\textit{The classification of surfaces with $p_{g}=q=0$ isogenous to a product of curves},
{Pure {A}ppl. {M}ath. Q.}, Special Issue: In honor of Fedor Bogomolov, \textbf{4} (2008), no.\ 2, 547--586. 	

\bibitem{bgv}  
Bauer, I., Garion, S., Vdovina, A., (editors),  
\textit{Beauville surfaces and groups}, Proceedings of a conference in Newcastle, 2012.

\bibitem{catanese:isogtoproduct}
Catanese, F., 
\textit{Fibred surfaces, varieties isogenous to a product and related moduli spaces},
{Amer.\ J.\ Math.} \textbf{122} (2000), 1--44.

\bibitem{fairbairn}
Fairbairn, B.,
\textit{Recent work on Beauville surfaces, structures and groups},
to appear in: \textit{Proceedings of Groups St Andrews 2013}, 
\href{http://arxiv.org/abs/1405.7547v1}{arXiv:[mathGR]1405.7547v1}.

\bibitem{fuertesjones}
Fuertes, Y., Jones, G.\thinspace{}A.,
\textit{Beauville surfaces and finite groups},
{J.\ Algebra} \textbf{340} (2011), 13--27.

\bibitem{fairbairnmagaardparker:simplebeauville}
Fairbairn, B., Magaard, K., Parker, Ch.,
\textit{Generation of finite quasisimple groups with an application to groups acting on Beauville surfaces},
{Proc.\ Lond.\ Math.\ Soc.} (3) \textbf{107} (2013), no.\ 4, 744--798. 

\bibitem{frapporti}
Frapporti, D.
\textit{Mixed quasi-\'{e} surfaces, new surfaces of general type with $p_g=0$ and their fundamental group},
{Collect.\ Math.} \textbf{64} (2013), no.\ 3, 293--311. 

\bibitem{jones:survey}
Jones, G.~A.,
\textit{Beauville surfaces and groups: a survey},
in: \textit{Rigidity and Symmetry}, 
R.~Connelly, A.~Ivi\'c Weiss, W.~Whiteley (editors), Fields Institute Communications, Vol.\ 70, 2014, viii + 374 p.

\bibitem{malleguralnik}
Guralnick, R., Malle, G.,
\textit{Simple groups admit Beauville structures},
{J.\ Lond.\ Math.\ Soc.} (2) \textbf{85} (2012), no.\ 3, 694--721. 

\end{thebibliography}
\end{document}